\documentclass{article}
\usepackage[utf8]{inputenc}
\usepackage{amsmath,amsthm,amssymb}
\usepackage{mathrsfs}
\usepackage{fullpage}

\newtheorem{theorem}{Theorem}[section]
\newtheorem{lemma}[theorem]{Lemma}
\newtheorem{corollary}[theorem]{Corollary}
\newtheorem{proposition}[theorem]{Proposition}

\theoremstyle{definition}			                						

\newtheorem{remark}[theorem]{Remark}

\DeclareMathOperator{\diag}{diag}

\title{Polynomials that preserve nonnegative matrices of order two}
\author{
Benjamin J.~Clark\thanks{
Supported by a 2018 University of Washington Bothell Founder’s Fellow Award. Address: University of Washington Bothell, Bothell, WA 98011-8246. E-mail: {\tt thrin@uw.edu}.} 
\and 
Pietro Paparella\thanks{
†Supported by a 2021 University of Washington Bothell Scholarship, Research, and Creative Practice
(SRCP) Seed Grant. Address: University of Washington Bothell, Bothell, WA 98011-8246. E-mail: {\tt pietrop@uw.edu}.}}
\date{\today}

\begin{document}
\maketitle

\begin{abstract}
A known characterization for entire functions that preserve all nonnegative matrices of order two is shown to characterize polynomials that preserve nonnegative matrices of order two. Equivalent conditions are derived and used to prove that $\mathscr{P}_3 \subset \mathscr{P}_2$, which was previously unknown. A new characterization is given for polynomials that preserve nonnegative circulant matrices of order two. 

\vspace*{12pt}
\noindent {\it Keywords:} polynomial; nonnegative matrix; positive matrix; circulant matrix 

\vspace*{12pt}
\noindent MSC[2020]: 15B48; 30C10 
\end{abstract}

\section{Introduction}

In 1979, Loewy and London \cite{ll1978-79} posed the problem of characterizing     
\[\mathscr{P}_n := \left\{ p \in \mathbb{C}[x] \mid p(A) \geq 0, \forall A \in \mathsf{M}_n (\mathbb{R}), A \geq 0  \right\}, \]
for every positive integer $n$. In particular, and for practical purposes, necessary and sufficient conditions are sought in terms of the coefficients of the polynomials belonging to $\mathscr{P}_n$. 

The characterization of $\mathscr{P}_1$ is known as the P\'{o}lya--Szeg\"{o} theorem (see, e.g., Powers and Reznick \cite[Proposition 2]{pr2000}), which asserts that $p \in \mathscr{P}_1$ if and only if 
\[ p(x) = \left(f_1(x)^2 +f_2(x)^2\right) + x\left(g_1(x)^2 + g_2(x)^2\right). \] 

Bharali and Holtz \cite{bh2008} gave partial results for the set 
\[\mathscr{F}_n := \left\{ f~\text{entire} \mid f(A) \geq 0, \forall A \in \mathsf{M}_n (\mathbb{R}), A \geq 0  \right\} \supset \mathscr{P}_n \]
and characterized entire functions that preserve certain structured nonnegative matrices, including upper-triangular matrices and circulant matrices. In addition, they gave necessary and sufficient conditions for an entire function $f$ to belong to $\mathscr{F}_2$. Specifically, they showed that an entire function $f$ belongs to $\mathscr{F}_2$ if and only if  
\begin{equation}
\label{bhcond}
f(x + y) - f(x - y) \ge 0,~\forall x, y \ge 0,    
\end{equation}
and
\begin{equation}
\label{bhcond2}
(x + y - z)f(x - y)+(z - x + y)f(x + y) \ge 0,~\forall x \ge z \ge 0, y \ge x - z,
\end{equation}
or, equivalently, if $f$ satisfies \eqref{bhcond} and 
\begin{equation}
\label{bhcond3}
(x + y)f(x - y)+(y - x)f(x + y) \ge 0,~\forall y \ge x \ge 0.    
\end{equation}

More recently, Clark and Paparella \cite{cp2022} gave partial results for $\mathscr{P}_n$ in terms of the coefficients of the polynomials in $\mathscr{P}_n$. While it is known that $\mathscr{P}_{n+1} \subseteq \mathscr{P}_n$, $\forall n \in \mathbb{N}$, Clark and Paparella proved that $\mathscr{P}_2 \subset \mathscr{P}_1$ and conjectured that $\mathscr{P}_{n+1} \subset \mathscr{P}_n$, $\forall n \in \mathbb{N}$. 

In this work, it is shown that the characterization for $\mathscr{F}_2$ established by Bharali and Holtz also characterizes $\mathscr{P}_2$. Our demonstration, which utilizes the definition of matrix function via \emph{Jordan canonical form}, directly establishes that \eqref{bhcond} and \eqref{bhcond3} are necessary and sufficient whereas Bharali and Holtz establish \eqref{bhcond} and \eqref{bhcond2} (via the definition of matrix function via \emph{interpolating polynomial}) and proceed to show that \eqref{bhcond2} is equivalent to \eqref{bhcond3}. Equivalent conditions are derived for \eqref{bhcond} that are used to prove that $\mathscr{P}_3 \subset \mathscr{P}_2$, which was previously unknown. A new characterization is given for polynomials that preserve nonnegative circulant matrices of order two.

\section{Notation and Background}

The set of $m$-by-$n$ matrices with entries from a field $\mathbb{F}$ is denoted by $\mathsf{M}_{m\times n}(\mathbb{F})$. If $m = n$, then $\mathsf{M}_{m\times n}(\mathbb{F})$ is abbreviated to $\mathsf{M}_{n}(\mathbb{F})$. The set of all $n$-by-$1$ column vectors is identified with the set of all ordered $n$-tuples with entries in $\mathbb{F}$ and thus denoted by $\mathbb{F}^n$. 

If $A \in \mathsf{M}_n(\mathbb{F})$, then $a_{ij}$ denotes the $(i,j)$-entry of $A$. If $\mathbb{F} = \mathbb{R}$ and $a_{ij} \ge 0$ ($a_{ij} > 0$), $1 \le i,j \le n$, then $A$ is called \emph{nonnegative} (respectively, \emph{positive}) and this is denoted by $A \geq 0$ (respectively, $A > 0$). 

Unless otherwise stated,   
\[ p(x) = \sum_{k=0}^m a_k x^k \in \mathbb{C}[x],\]
where $a_m \ne 0$. If $n$ is a positive integer less than or equal to $m$, then the coefficients $a_0, a_1,\dots, a_{n-1}$ are called the \emph{first $n$ terms of $p$} and the coefficients $a_{m-n + 1}, \dots, a_{m-1}, a_m$ are called the \emph{last $n$ terms of $p$}.

\section{Basic Observations}

\begin{lemma}
    \label{diagsim}
If $D$ is a positive diagonal matrix, then $p(A) \ge 0$ if and only if $p(D^{-1} A D) \ge 0$.
\end{lemma}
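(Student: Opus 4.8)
The plan is to exploit the fundamental property that matrix functions commute with similarity transformations, which is a standard consequence of the definition of $p(A)$ via the Jordan canonical form. First I would recall that for any invertible matrix $S$ and any polynomial $p$, we have $p(S^{-1} A S) = S^{-1} p(A) S$; this holds because if $A = T J T^{-1}$ is a Jordan decomposition, then $S^{-1} A S = (S^{-1} T) J (S^{-1}T)^{-1}$ shares the same Jordan form, and the polynomial acts identically on the Jordan blocks. Taking $S = D$, a positive diagonal matrix, immediately yields $p(D^{-1} A D) = D^{-1} p(A) D$.

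With this identity in hand, the claim reduces to showing that nonnegativity is preserved under conjugation by a positive diagonal matrix. The key observation is that if $D = \diag(d_1, \dots, d_n)$ with each $d_i > 0$, then for any matrix $B$ the $(i,j)$-entry of $D^{-1} B D$ is $d_i^{-1} b_{ij} d_j = (d_j / d_i) b_{ij}$. Since each scaling factor $d_j / d_i$ is strictly positive, the sign of each entry is unchanged; in particular $b_{ij} \ge 0$ if and only if $(d_j/d_i) b_{ij} \ge 0$. Hence $B \ge 0$ if and only if $D^{-1} B D \ge 0$.

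Combining these two facts completes the argument: applying the entrywise observation to $B = p(A)$ gives that $p(A) \ge 0$ if and only if $D^{-1} p(A) D \ge 0$, and the similarity identity rewrites the latter as $p(D^{-1} A D) \ge 0$. Chaining these equivalences establishes that $p(A) \ge 0$ if and only if $p(D^{-1} A D) \ge 0$, as desired.

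I do not anticipate a genuine obstacle here, since both ingredients are elementary; the only point requiring minor care is ensuring that the positivity of the diagonal entries is used correctly, namely that the ratios $d_j/d_i$ are positive so that the equivalence of nonnegativity runs in both directions. The result is essentially a bookkeeping lemma that will be invoked later to normalize nonnegative matrices of order two into a convenient canonical form before applying the characterizations in \eqref{bhcond} and \eqref{bhcond3}.
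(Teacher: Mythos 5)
Your proof is correct and rests on exactly the same key identity as the paper's, namely $p(D^{-1} A D) = D^{-1} p(A) D$, from which the paper says the result follows immediately; you simply spell out the entrywise sign-preservation argument (and a Jordan-form justification of the identity, which for polynomials also follows directly from $(D^{-1}AD)^k = D^{-1}A^kD$) that the paper leaves implicit. No further changes are needed.
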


\begin{proof}
The result follows immediately by observing that \( p(D^{-1} A D) = D^{-1} p(A) D\). 
\end{proof}

\begin{lemma}
    \label{permsim}
If $P$ is a permutation matrix, then $p(A) \ge 0$ if and only if $p(P^\top{A}P) \geq 0$.
\end{lemma}

\begin{proof}
Similar to the proof of Lemma \ref{diagsim}.
\end{proof}

We briefly digress to present the following result which, to the best of our knowledge, has not previously been addressed in the literature.

\begin{theorem}
\label{realcoeff}
If $p \in \mathscr{P}_1$, then $a_k \in \mathbb{R}$, $\forall k \in \{0,1,\ldots,m\}$. 
\end{theorem}

\begin{proof}
It is known that if $f$ is an analytic function defined on a \emph{self-conjugate} domain $\mathcal{D} \subseteq \mathbb{C}$ (i.e., $\mathcal{D}$ is symmetric with respect to the real-axis in the complex-plane) and $f(x) \in \mathbb{R}$, $\forall x \in \mathcal{I}:=\mathcal{D}\cap\mathbb{R}$, then $f^{(k)}(x) \in \mathbb{R}$, $\forall x \in \mathcal{I}$ (see, e.g., Paparella \cite[Lemma 4.7]{p2015}). In particular, $p^{(k)}(x) \in \mathbb{R}$, $\forall x \ge 0$. The result follows by noting that $a_k = p^{(k)}(0)/k! \in \mathbb{R}$. 
\end{proof}

\begin{corollary}
\label{containment}
If $p \in \mathscr{P}_n$, then $a_k \in \mathbb{R}$, $\forall k \in \{0,1,\ldots,m\}$. 
\end{corollary}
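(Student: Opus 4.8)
The plan is to reduce the corollary to Theorem \ref{realcoeff} by exploiting the containment $\mathscr{P}_n \subseteq \mathscr{P}_1$ noted in the introduction. Since Theorem \ref{realcoeff} already guarantees that every polynomial in $\mathscr{P}_1$ has real coefficients, it suffices to verify that membership in $\mathscr{P}_n$ forces membership in $\mathscr{P}_1$; the desired conclusion then follows at once.

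To establish $\mathscr{P}_n \subseteq \mathscr{P}_1$ directly, I would fix $p \in \mathscr{P}_n$ together with an arbitrary scalar $a \ge 0$ and form the nonnegative matrix $A = \diag(a, 0, \ldots, 0) \in \mathsf{M}_n(\mathbb{R})$. Because $A$ is diagonal, $p(A) = \diag(p(a), p(0), \ldots, p(0))$, so the hypothesis $p(A) \ge 0$ forces $p(a) \ge 0$. As $a \ge 0$ was arbitrary, this shows $p \in \mathscr{P}_1$. Alternatively, one may simply iterate the known inclusions $\mathscr{P}_{k+1} \subseteq \mathscr{P}_k$ from $k = n-1$ down to $k = 1$ to obtain $\mathscr{P}_n \subseteq \mathscr{P}_1$ without any further computation.

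With $p \in \mathscr{P}_1$ in hand, Theorem \ref{realcoeff} applies verbatim and yields $a_k \in \mathbb{R}$ for every $k \in \{0, 1, \ldots, m\}$, completing the argument. There is no substantive obstacle here: the only points requiring attention are the routine observations that the diagonal embedding preserves nonnegativity and that a matrix function evaluated at a diagonal matrix acts entrywise along the diagonal, both of which are immediate from the definition of $p(A)$ for diagonal $A$ (cf.\ Lemma \ref{diagsim}).
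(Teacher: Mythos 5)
Your proposal is correct and takes essentially the same route as the paper: both reduce the corollary to Theorem~\ref{realcoeff} via the containment $\mathscr{P}_n \subseteq \mathscr{P}_1$, which the paper obtains by citing the known inclusions $\mathscr{P}_{n+1} \subseteq \mathscr{P}_n$. Your extra direct verification of that containment using $A = \diag(a, 0, \ldots, 0)$ is sound, but it only re-proves what the cited inclusion already gives.
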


\begin{proof}
Since $\mathscr{P}_{n+1} \subseteq \mathscr{P}_n$, $\forall n \in \mathbb{N}$ \cite[Lemma 1]{bh2008}, it follows that $p \in \mathscr{P}_1$. The result is now immediate from Theorem \ref{realcoeff}. 
\end{proof}

\section{A Characterization of $\mathscr{P}_2$}

\begin{lemma}
\label{posmatrix}
Let $A \in \mathsf{M}_2(\mathbb{R})$ and suppose that $A > 0$. If $\sigma(A) = \{ \rho, \mu \}$, with $\rho > \vert \mu \vert$, then $A$ is similar to a matrix of the form
\[ 
\frac{1}{1 + \alpha}
    \begin{bmatrix}
        \alpha\rho + \mu    & \rho - \mu        \\
        \alpha(\rho - \mu)  & \alpha\mu + \rho
    \end{bmatrix}, 
\]
where $\alpha > 0$.
\end{lemma}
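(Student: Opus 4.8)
The plan is to prove the claim by a spectral argument: denoting the displayed matrix by $B_\alpha$, I would show that $B_\alpha$ is isospectral with $A$ for \emph{every} value of the parameter and that both matrices are diagonalizable, so that each is similar to $\diag(\rho,\mu)$ and hence to the other. No explicit similarity transform is needed for the conclusion, only transitivity of similarity.

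First I would record that $A$ is diagonalizable. Since $A > 0$, its characteristic polynomial $x^2 - (\operatorname{tr}A)\,x + \det A$ has discriminant $(a_{11}-a_{22})^2 + 4a_{12}a_{21} > 0$, so $A$ has two distinct real eigenvalues; by hypothesis these are $\rho$ and $\mu$ with $\rho > \vert\mu\vert$, and in particular $\rho \neq \mu$. Consequently $A$ is similar to $\diag(\rho,\mu)$.

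Next I would compute the trace and determinant of $B_\alpha$. A direct calculation gives $\operatorname{tr}B_\alpha = \frac{1}{1+\alpha}\big((\alpha\rho+\mu)+(\alpha\mu+\rho)\big) = \rho+\mu$, and, after expanding the numerator of the determinant and recognizing that it equals $\rho\mu(\alpha^2+2\alpha+1)$, that $\det B_\alpha = \rho\mu$. The crucial point is that both quantities are independent of $\alpha$: the form is engineered so that the whole family $\{B_\alpha\}$ is isospectral, with common characteristic polynomial $(x-\rho)(x-\mu)$. Because this polynomial has distinct roots, each $B_\alpha$ is diagonalizable and similar to $\diag(\rho,\mu)$. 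By transitivity, $A$ is similar to $B_\alpha$ for every $\alpha$, and choosing any $\alpha > 0$ yields the statement.

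The computation of $\det B_\alpha$ is the only place requiring care, since one must verify that the numerator factors precisely as $\rho\mu(1+\alpha)^2$ so as to cancel the factor $(1+\alpha)^{-2}$; this is the main (though routine) obstacle. As an aside that explains the origin of the form and the constraint $\alpha > 0$, one can check directly that $(1,1)^\top$ and $(1,-\alpha)^\top$ are eigenvectors of $B_\alpha$ for $\rho$ and $\mu$, respectively; after a positive diagonal similarity (Lemma \ref{diagsim}) normalizing the Perron eigenvector of $A$ to $(1,1)^\top$, the subdominant eigenvector of a positive matrix necessarily has components of opposite sign, which is exactly what produces a positive parameter.
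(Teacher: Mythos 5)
Your proof is mathematically correct for the lemma as literally stated, and the computations check out: $\operatorname{tr} B_\alpha = \rho + \mu$ and $\det B_\alpha = \rho\mu$ for every $\alpha \neq -1$, so each $B_\alpha$ shares the characteristic polynomial $(x-\rho)(x-\mu)$ with $A$; since $\rho > \vert\mu\vert$ forces $\rho \neq \mu$, both matrices are diagonalizable, hence both are similar to $\diag(\rho,\mu)$ and therefore to each other. This is a genuinely different route from the paper's: the paper invokes the Perron--Frobenius theorem to get a positive eigenvector $x$ of $A$, sets $D = \diag(x_1,x_2)$, and shows that $B := D^{-1}AD$ itself has the displayed form, with the sign constraint $\alpha > 0$ forced by positivity of $B$ (the subdominant eigenvector must have components of opposite sign, else an off-diagonal entry of $B$ would be negative).

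However, your conclusion is strictly weaker than what the paper's proof actually establishes, and the difference is precisely what the lemma exists to deliver. In Theorem \ref{thm:p2char} this lemma is invoked together with Lemmas \ref{diagsim} and \ref{permsim} to reduce the claim ``$p(A) \ge 0$'' to ``$p(B) \ge 0$''; that reduction is legitimate only because the similarity is implemented by a \emph{positive diagonal} matrix, since $p(D^{-1}AD) = D^{-1}p(A)D$ preserves entrywise nonnegativity, whereas for an arbitrary invertible $S$ the identity $p(SBS^{-1}) = S\,p(B)\,S^{-1}$ preserves nothing about signs of entries. Your spectral argument produces only an unspecified invertible $S$, so if it replaced the paper's proof, the proof of the main theorem would break. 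The material you relegate to an ``aside'' --- normalizing the Perron eigenvector of $A$ to $(1,1)^\top$ by a positive diagonal similarity and observing that the $\mu$-eigenvector then has mixed signs --- is in fact the paper's entire argument, and it is the part that carries the load; it should be the proof itself, not a remark. What your approach buys in exchange is the (unneeded) observation that $A$ is abstractly similar to $B_\alpha$ for \emph{every} $\alpha > 0$, rather than for one particular value.
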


\begin{proof}
By the Perron--Frobenius theorem for positive matrices, there is a positive vector $x$ such that $Ax = \rho x$. If $D = \diag(x_1, x_2)$, then the positive matrix 
\[ B := D^{-1} A D \]
has row sums equal to $\rho$. Thus, there is an invertible matrix $\hat{S} = \begin{bmatrix}
1 & \hat{a} \\
1 & \hat{b}
\end{bmatrix}$ 
such that 
\[ 
B = \hat{S}
\begin{bmatrix}
\rho & 0 \\
0 & \mu 
\end{bmatrix} \hat{S}^{-1}.
\]
Notice that $\hat{a} \neq 0$ and $\hat{b} \neq 0$: for contradiction, if $\hat{a} = 0$, then
\[B \begin{bmatrix}
0 \\
\hat{b}
\end{bmatrix} = \mu \begin{bmatrix}
0 \\
\hat{b}
\end{bmatrix}, \] 
but 
\[ B \begin{bmatrix}
0 \\
\hat{b}
\end{bmatrix} = \hat{b} \begin{bmatrix}
b_{12} \\
b_{22}
\end{bmatrix}. \]
Thus, $b_{12} = 0$, but this is a contradiction since $B > 0$. A similar calculation demonstrates that $\hat{b} \neq 0$. 

If 
\[ 
S := \hat{S} 
\begin{bmatrix}
1 & 0 \\
0 & 1/\hat{a}
\end{bmatrix} = 
\begin{bmatrix}
1 & 1 \\
1 & a
\end{bmatrix}, \]
where $a = \hat{b} / \hat{a}$, then 
\[ B = S \begin{bmatrix}
\rho & 0 \\
0 & \mu 
\end{bmatrix} S^{-1}. \] 
Furthermore, $a < 0$ (otherwise, 
\[ B = 
\frac{1}{1-a}
\begin{bmatrix}
a\rho-\mu & \mu - \rho \\
a(\rho - \mu) & a\mu - \rho 
\end{bmatrix} \]
and $b_{12} < 0$). Thus,
\[ 
S = 
\begin{bmatrix}
1 & 1 \\ 
1 & -\alpha
\end{bmatrix},~\alpha > 0, \]
and 
\begin{equation}
    \label{altmatrix}
    B = 
    \frac{1}{1 + \alpha}
    \begin{bmatrix}
    \alpha\rho + \mu    & \rho - \mu        \\
    \alpha(\rho - \mu)  & \alpha\mu + \rho
    \end{bmatrix},
\end{equation}
as desired.
\end{proof}

To simplify the main result, we rely on the following result \cite[Lemma 4]{bh2008}.   

\begin{lemma}
\label{bhlemma}
If $p \in \mathbb{R}[x]$, then $p \in \mathscr{P}_n$ if and only if $p(A) \ge 0$ whenever $A > 0$. 
\end{lemma}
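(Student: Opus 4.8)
The plan is to prove the two implications separately, with essentially all the content residing in a perturbation-and-limit argument for the nontrivial direction.

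First I would dispatch the forward implication, which requires no work. If $p \in \mathscr{P}_n$, then by definition $p(A) \ge 0$ for every nonnegative $A \in \mathsf{M}_n(\mathbb{R})$; since every positive matrix is in particular nonnegative, it follows at once that $p(A) \ge 0$ whenever $A > 0$.

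For the converse, suppose $p(A) \ge 0$ for every positive matrix $A$, and let $A \ge 0$ be arbitrary. Let $E \in \mathsf{M}_n(\mathbb{R})$ denote the all-ones matrix and set $A_\varepsilon := A + \varepsilon E$ for $\varepsilon > 0$. Each $A_\varepsilon$ is positive, since every one of its entries is at least $\varepsilon > 0$; hence $p(A_\varepsilon) \ge 0$ by hypothesis, i.e., every entry of $p(A_\varepsilon)$ is nonnegative for each $\varepsilon > 0$. The crux is to pass to the limit as $\varepsilon \to 0^+$. Because $p$ is a polynomial, we have $p(A_\varepsilon) = \sum_{k=0}^m a_k A_\varepsilon^k$, and each entry of this matrix is a polynomial in $\varepsilon$, hence continuous. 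Therefore $\lim_{\varepsilon \to 0^+} p(A_\varepsilon) = p(A)$ entrywise. Since the entries of $p(A_\varepsilon)$ are nonnegative for all $\varepsilon > 0$ and the set of nonnegative reals is closed, each entry of the limit $p(A)$ is nonnegative; that is, $p(A) \ge 0$. As $A \ge 0$ was arbitrary, $p \in \mathscr{P}_n$.

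I expect no serious obstacle here: the argument is routine, and the only two points demanding even a word of care are that the perturbation $A + \varepsilon E$ is strictly positive while converging to $A$, and that the passage to the limit is legitimate. The latter is immediate \emph{precisely because} $p$ is a polynomial, so that $p(A)$ is defined simply as $\sum_{k=0}^m a_k A^k$ and depends entrywise polynomially — and thus continuously — on the entries of $A$. This is the step where the restriction $p \in \mathbb{R}[x]$ (as opposed to a general entire function) makes the continuity transparent.
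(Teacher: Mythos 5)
Your proof is correct and follows the same route as the paper, which simply cites the continuity of $p$ and the density of positive matrices among nonnegative matrices; your perturbation $A_\varepsilon = A + \varepsilon E$ and entrywise limit argument just make that one-line proof explicit.
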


\begin{proof}
Follows from the continuity of $p$ and the fact that the set of positive matrices of order $n$ is dense in the set of all nonnegative matrices of order $n$.
\end{proof}

\begin{theorem}
[cf.~{\cite[Theorem 13]{bh2008}}]
\label{thm:p2char}
If $p \in \mathbb{R}[x]$, then $p \in \mathscr{P}_2$ if and only if 
\begin{equation}
p(\rho) \geq \vert{p(\mu)}\vert,~\forall \rho, \mu \in \mathbb{R}, \rho \ge \vert{\mu}\vert \label{spectralcond}
\end{equation}
and
\begin{equation} 
\rho p(-\mu) + \mu p(\rho) \ge 0, \label{ratiocond} 
\end{equation}
whenever $0 < \mu \le \rho$.
\end{theorem}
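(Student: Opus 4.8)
The plan is to characterize membership in $\mathscr{P}_2$ by reducing, via Lemma~\ref{bhlemma}, to the requirement that $p(A)\ge 0$ for every \emph{positive} $A\in\mathsf{M}_2(\mathbb{R})$, and then to pass to the canonical form of Lemma~\ref{posmatrix}. First I would note that a positive matrix of order two has $a_{12}a_{21}>0$, so its discriminant $(a_{11}-a_{22})^2+4a_{12}a_{21}$ is positive; hence $A$ has two distinct real eigenvalues, the dominant of which, $\rho$, satisfies $\rho>\vert\mu\vert$ by Perron--Frobenius. Thus Lemma~\ref{posmatrix} applies, and together with Lemma~\ref{diagsim} it gives $p(A)\ge 0$ if and only if $p(B)\ge 0$, where $B$ is the matrix~\eqref{altmatrix}. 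Consequently $p\in\mathscr{P}_2$ if and only if $p(B)\ge 0$ for every admissible triple $(\rho,\mu,\alpha)$, and the problem becomes one of determining for which $(\rho,\mu)$ this holds over the admissible range of $\alpha$.

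Since $B=S\,\diag(\rho,\mu)\,S^{-1}$ with $S$ the matrix appearing in the proof of Lemma~\ref{posmatrix}, I would compute
\[
p(B)=\frac{1}{1+\alpha}\begin{bmatrix} \alpha p(\rho)+p(\mu) & p(\rho)-p(\mu)\\ \alpha\bigl(p(\rho)-p(\mu)\bigr) & p(\rho)+\alpha p(\mu)\end{bmatrix}.
\]
Because $\tfrac{1}{1+\alpha}>0$ and $\alpha>0$, nonnegativity of $p(B)$ is equivalent to the three scalar inequalities $p(\rho)\ge p(\mu)$, $\alpha p(\rho)+p(\mu)\ge 0$, and $p(\rho)+\alpha p(\mu)\ge 0$. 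I would then identify the admissible range of $\alpha$: positivity of the diagonal entries of $B$ gives $\alpha\in(0,\infty)$ when $\mu\ge 0$, but only $\alpha\in\bigl(\vert\mu\vert/\rho,\ \rho/\vert\mu\vert\bigr)$ when $\mu<0$.

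The heart of the argument is the resulting case analysis. When $\mu\ge 0$, the inequality $\alpha p(\rho)+p(\mu)\ge 0$ must hold for arbitrarily small $\alpha>0$ and $p(\rho)+\alpha p(\mu)\ge 0$ for arbitrarily large $\alpha$, which together force $p(\rho)\ge 0$ and $p(\mu)\ge 0$; combined with $p(\rho)\ge p(\mu)$ this is exactly~\eqref{spectralcond} on the range $0\le\mu\le\rho$ (the diagonal $\mu=\rho$ recovering $p\ge 0$ on $[0,\infty)$). When $\mu<0$, I would write the negative eigenvalue as $-\mu$ with $0<\mu<\rho$; since $p(\rho)\ge 0$ is already known, the two linear-in-$\alpha$ inequalities are extremal at the endpoints of the bounded interval and both reduce to $\mu p(\rho)+\rho p(-\mu)\ge 0$, that is, to~\eqref{ratiocond}. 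Finally I would verify the converse bookkeeping: \eqref{ratiocond} together with $p(\rho)\ge 0$ yields $p(\rho)+p(-\mu)\ge 0$, which with $p(\rho)\ge p(-\mu)$ supplies the remaining instance $p(\rho)\ge\vert p(-\mu)\vert$ of~\eqref{spectralcond} for negative argument, while conversely \eqref{spectralcond} alone is too weak to recover~\eqref{ratiocond} (since $\mu<\rho$ makes $\mu p(\rho)+\rho p(-\mu)\ge 0$ strictly stronger than $p(\rho)+p(-\mu)\ge 0$), explaining why both conditions are needed.

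I expect the main obstacle to be precisely this matching of the extremal values of the $\alpha$-inequalities to~\eqref{spectralcond} and~\eqref{ratiocond}: one must track the sign of $\mu$, which controls whether the admissible $\alpha$-range is a half-line or a bounded interval, and the sign of $p(\mu)$, which controls at which endpoint each inequality is binding, and must confirm that no conditions are either lost or spuriously introduced. The boundary cases $\rho=\vert\mu\vert$ lie outside the positive-matrix regime and would be dispatched by continuity of $p$, the density statement underlying Lemma~\ref{bhlemma} ensuring they cause no difficulty.
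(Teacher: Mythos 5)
Your proposal is correct, and its sufficiency half is essentially the paper's: both reduce via Lemma~\ref{bhlemma} to positive matrices, pass to the canonical form \eqref{altmatrix} of Lemma~\ref{posmatrix}, and check the entries of $p(B)$ case-by-case in the sign of $\mu$. Where you genuinely differ is the necessity half. The paper exhibits two ad hoc nonnegative test matrices --- the circulant \eqref{twocirc}, which yields \eqref{spectralcond}, and $\left[\begin{smallmatrix} 0 & \rho \\ \mu & \rho-\mu \end{smallmatrix}\right]$, whose eigendecomposition is computed explicitly to yield \eqref{ratiocond} --- whereas you extract both conditions from the single one-parameter family $B(\rho,\mu,\alpha)$ by letting $\alpha$ sweep its admissible range: $\alpha\to 0^+$ and $\alpha\to\infty$ force \eqref{spectralcond} when $\mu\ge 0$, and the endpoints $\vert\mu\vert/\rho$ and $\rho/\vert\mu\vert$ of the bounded interval force \eqref{ratiocond} when $\mu<0$. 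In fact your left endpoint matrix at $\alpha=\vert\mu\vert/\rho$ \emph{is} the paper's second test matrix, and $\alpha=1$ is the circulant, so your analysis subsumes the paper's choices and explains where they come from; the modest price is that you must observe that every admissible triple is realized by a positive matrix (immediate, since $B$ itself is positive exactly on the admissible $\alpha$-range), and you must invoke continuity in $\alpha$ (and in $\mu$, for the boundary cases $\rho=\vert\mu\vert$), which the paper's explicit test matrices avoid since they are nonnegative outright. Your version also buys a small repair: in the paper's sufficiency argument, the entry $\alpha p(\mu)+p(\rho)$ with $\mu<0$ is asserted to be nonnegative ``by \eqref{spectralcond},'' but that justification is only adequate for $\alpha\le 1$; when $1<\alpha<\rho/\vert\mu\vert$ and $p(\mu)<0$ one needs, exactly as you note, the upper endpoint constraint $\alpha<\rho/\vert\mu\vert$ combined with \eqref{ratiocond} (or the permutation symmetry $\alpha\mapsto 1/\alpha$ reducing it to the $(1,1)$-entry case), so your explicit tracking of both endpoints closes a gap the paper glosses over.
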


\begin{proof}
If $p \in \mathscr{P}_2$, then the necessity of \eqref{spectralcond} follows by noting that if 
\begin{equation}
\label{twocirc}
A := \frac{1}{2} 
\begin{bmatrix}
\rho + \mu  & \rho - \mu \\
\rho - \mu  & \rho + \mu
\end{bmatrix} \ge 0, 
\end{equation}
with $\rho \ge \vert{\mu}\vert$, then
\begin{equation}
\label{polycirc}
p(A) = \frac{1}{2} 
\begin{bmatrix}
p(\rho) + p(\mu)  & p(\rho) - p(\mu) \\
p(\rho) - p(\mu)  & p(\rho) + p(\mu)
\end{bmatrix} \ge 0.
\end{equation}

Let $\rho$ and $\mu$ be real numbers such that $0 < \mu \le \rho$. If 
\begin{equation*}
    A :=
    \begin{bmatrix}
    0       & \rho                      \\
    \mu    & \rho - \mu  
    \end{bmatrix} \ge 0
\end{equation*}
then  
\begin{align*}
A 
&= \begin{bmatrix}
\rho    & 1  \\
-\mu     & 1 
\end{bmatrix}
\begin{bmatrix}
-\mu & 0 \\
0 & \rho
\end{bmatrix}
\begin{bmatrix}
\rho    & 1  \\
-\mu     & 1 
\end{bmatrix}^{-1}  \\
&=
\frac{1}{\rho + \mu}
\begin{bmatrix}
\rho    & 1  \\
-\mu     & 1 
\end{bmatrix}
\begin{bmatrix}
-\mu & 0 \\
0 & \rho
\end{bmatrix}
\begin{bmatrix}
1       & -1     \\  
\mu    & \rho 
\end{bmatrix},
\end{align*}
and 
\begin{equation*}
    p(A) =
    \frac{1}{\rho + \mu}
    \begin{bmatrix}
    \rho p(-\mu) + \mu p(\rho)   & \rho(p(\rho) - p(-\mu))      \\
    \mu(p(\rho) - p(-\mu))       & \rho p(\rho) + \mu p(-\mu) 
    \end{bmatrix} \ge 0.
\end{equation*}
i.e., $p$ satisfies \eqref{ratiocond}.

Conversely, suppose that $p$ satisfies \eqref{spectralcond} and \eqref{ratiocond}. In view of Lemma \ref{bhlemma}, it suffices to show that $p$ maps positive matrices of order two to nonnegative matrices of order two. To this end, suppose that $A$ is a positive matrix of order two with spectrum $\{ \rho, \mu \}$. Without loss of generality, assume that $\rho > \vert\mu\vert$. 

By Lemma \ref{posmatrix}, $A$ is similar to a matrix of the form
\[ 
    B =
    \frac{1}{1 + \alpha}
    \begin{bmatrix}
        \alpha\rho + \mu    & \rho - \mu        \\
        \alpha(\rho - \mu)  & \alpha\mu + \rho
    \end{bmatrix},
\]
where $\alpha > 0$. In view of Lemmas \ref{diagsim} and \ref{permsim}, it suffices to show that $p(B) \ge 0$. By \eqref{altmatrix}, notice that
\[
p(B) 
= Sp(D)S^{-1} 
= \frac{1}{1 + \alpha}
\begin{bmatrix}
\alpha p(\rho) + p(\mu)     & p(\rho) - p(\mu)        \\
\alpha(p(\rho) - p(\mu))    & \alpha p(\mu) + p(\rho)
\end{bmatrix}.
\]
Since $p$ satisfies \eqref{spectralcond}, it follows that $p \in \mathscr{P}_1$. Thus, $p(B) \ge 0$ whenever $\mu\ge{0}$. 

If $\mu < 0$, then, since $B > 0$, it follows that $\alpha > -{\mu}/{\rho} = {\vert\mu\vert}/{\rho}$. Thus, 
\[ 
\alpha p(\rho) + p(\mu) > \frac{\vert\mu\vert}{\rho} p(\rho) + p(-\vert\mu\vert) = \frac{\vert\mu\vert p(\rho) + \rho p(-\vert\mu\vert)}{\rho} \ge 0 
\]
by \eqref{ratiocond}. The remaining entries of $p(B)$ are nonnegative by \eqref{spectralcond}.
\end{proof}

\section{Equivalent Conditions for $\mathscr{P}_2$}

\begin{proposition}
    \label{altchar}
If $p \in \mathbb{R}[x]$, then 
\begin{equation}
p(x) \geq \vert{p(y)}\vert,~\forall x, y \in \mathbb{R}, x \ge \vert{y}\vert \label{modcond}
\end{equation}
if and only if 
\begin{equation}
p' \in \mathscr{P}_1 \label{derivpone}   
\end{equation}
and 
\begin{equation}
p(x) \geq |p(-x)|,~\forall x \geq 0. \label{modcondtwo}
\end{equation}
\end{proposition}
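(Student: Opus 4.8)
The plan is to first translate condition \eqref{derivpone} into a purely analytic statement and then establish the two implications by an elementary case analysis. The key observation is that a polynomial $q$ belongs to $\mathscr{P}_1$ precisely when $q(t) \ge 0$ for every $t \ge 0$, since the nonnegative matrices of order one are exactly the nonnegative reals and $q([t]) = [q(t)]$. Applying this to $q = p'$ shows that \eqref{derivpone} is equivalent to the assertion that $p$ is nondecreasing on $[0,\infty)$, via the mean value theorem in one direction and the limit definition of the derivative in the other. I would use this reformulation throughout.

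For the forward implication, I would assume \eqref{modcond}. To recover \eqref{modcondtwo}, I substitute $y = -x$ for $x \ge 0$; since $x \ge |{-x}| = x$, the hypothesis applies and yields $p(x) \ge |p(-x)|$ at once. To recover \eqref{derivpone}, I fix $0 \le x_2 \le x_1$ and substitute $y = x_2$, which is legitimate because $x_1 \ge x_2 = |x_2|$; then \eqref{modcond} gives $p(x_1) \ge |p(x_2)| \ge p(x_2)$, so $p$ is nondecreasing on $[0,\infty)$ and hence $p' \in \mathscr{P}_1$ by the reformulation above.

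For the converse, I would assume \eqref{derivpone} and \eqref{modcondtwo}, so that $p$ is nondecreasing on $[0,\infty)$, and fix $x \ge |y| \ge 0$, splitting into cases by the sign of $y$. If $y \ge 0$, then \eqref{modcondtwo} applied at $y$ gives $p(y) \ge |p(-y)| \ge 0$, so $|p(y)| = p(y)$, and monotonicity together with $x \ge y$ gives $p(x) \ge p(y) = |p(y)|$. If $y < 0$, I set $z = -y > 0$, so that $x \ge z$; monotonicity gives $p(x) \ge p(z)$, while \eqref{modcondtwo} at $z$ gives $p(z) \ge |p(-z)| = |p(y)|$, whence $p(x) \ge |p(y)|$. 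In both cases \eqref{modcond} holds.

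The argument is largely routine; the only point requiring care is the sign bookkeeping in the two directions—namely, checking that the substitutions in the forward implication respect the constraint $x \ge |y|$, and correctly reducing the case $y < 0$ in the converse to \eqref{modcondtwo} through the monotone behavior of $p$ on the nonnegative axis. I do not anticipate any serious obstacle beyond this careful case analysis.
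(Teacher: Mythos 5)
Your proposal is correct and follows essentially the same route as the paper: both reduce \eqref{derivpone} to monotonicity of $p$ on $[0,\infty)$, prove the forward implication by substituting suitable values into \eqref{modcond} and passing to the derivative via difference quotients, and prove the converse via the chain $p(x) \ge p(|y|) \ge |p(y)|$ (your explicit sign-of-$y$ case split is just an unpacking of the paper's one-line inequality).
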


\begin{proof}
First, note that $p \in \mathscr{P}_1$ whenever $p$ satisfies \eqref{modcond} or \eqref{modcondtwo}.

If \eqref{modcond} holds, then \eqref{modcondtwo} clearly holds. To demonstrate \eqref{derivpone}, for contradiction, let $x \ge 0$ and $h > 0$. By \eqref{modcond}, $p(x + h) \ge \vert p(x) \vert \ge p(x)$. Hence, $p(x+h) - p(x) \ge 0$. Dividing by $h$ and letting $h \to 0^+$ shows that $p' \in \mathcal{P}_1$. 

Assume that $a, b \in \mathbb{R}$, with $a \geq |b|$. By assumption, $p' \in \mathscr{P}_1$ and so $p$ is increasing on $[0,\infty)$. Using this and \eqref{modcondtwo}, we obtain \( p(a) \ge p(\vert b \vert) \ge \vert p(b) \vert \).
\end{proof}

Recall that if $f: \mathbb{C} \longrightarrow \mathbb{C}$, then 
\[ f_e(x) := \frac{f(x) + f(-x)}{2} \]
is called the \emph{even-part of $f$} and  
\[ f_o(x) := \frac{f(x) - f(-x)}{2} \]
is called the \emph{odd-part of $f$}.  

\begin{proposition}
\label{prop:evenoddcondition}
If $p: \mathbb{C} \longrightarrow \mathbb{C}$, then $p$ satisfies \eqref{modcondtwo} if and only if $p_e, p_o \in \mathscr{P}_1$.
\end{proposition}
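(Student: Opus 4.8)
The plan is to reduce the biconditional to an elementary fact about real numbers, namely that for $a, b \in \mathbb{R}$ one has $a \ge |b|$ if and only if both $a + b \ge 0$ and $a - b \ge 0$. Since $2p_e(x) = p(x) + p(-x)$ and $2p_o(x) = p(x) - p(-x)$, the two halves of this real-number equivalence correspond exactly to the nonnegativity of $p_e$ and $p_o$ on $[0,\infty)$, which is what membership in $\mathscr{P}_1$ means at the scalar level. The first thing to settle, however, is that all quantities in sight are real; without this, the modulus inequality cannot be split, because for complex $b$ the implication $a \ge |b| \Rightarrow a \pm b \ge 0$ fails (take $a = 2$ and $b = i$).

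For the forward direction, I would first observe---as already noted in the proof of Proposition \ref{altchar}---that \eqref{modcondtwo} forces $p \in \mathscr{P}_1$, since $p(x) \ge |p(-x)| \ge 0$ for every $x \ge 0$. Theorem \ref{realcoeff} then guarantees that $p$ has real coefficients, so that $p(x)$ and $p(-x)$ are real for all real $x$. Fixing $x \ge 0$ and writing $a = p(x)$ and $b = p(-x)$, the inequality $a \ge |b|$ yields $a \ge b$ and $a \ge -b$, i.e., $p_o(x) = (a - b)/2 \ge 0$ and $p_e(x) = (a + b)/2 \ge 0$; since $x \ge 0$ was arbitrary, $p_e, p_o \in \mathscr{P}_1$.

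For the converse, I would apply Theorem \ref{realcoeff} to $p_e$ and $p_o$ to conclude that both have real coefficients, whence $p = p_e + p_o$ does as well and $p(x)$, $p(-x)$ are real for real $x$. For $x \ge 0$, set $u = p_e(x) \ge 0$ and $v = p_o(x) \ge 0$; then $p(x) = u + v$ and $p(-x) = u - v$, and the elementary inequality $u + v \ge |u - v|$ for nonnegative reals gives $p(x) \ge |p(-x)|$, which is precisely \eqref{modcondtwo}.

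The only genuine subtlety---and the step I expect to require the most care---is the appeal to Theorem \ref{realcoeff} to rule out complex values; everything else is a transcription of a basic real-number inequality. I would take care to note explicitly that $p$ (and hence $p_e$, $p_o$) are understood as polynomials, so that Theorem \ref{realcoeff} applies as stated, and that the implicit content of each inequality of the form ``$q(x) \ge 0$'' is that $q(x)$ is real as well as nonnegative.
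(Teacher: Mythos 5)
Your proof is correct and follows essentially the same route as the paper's: the paper argues via the one-line chain of equivalences $p_e, p_o \in \mathscr{P}_1 \iff p(x) + p(-x) \ge 0 \text{ and } p(x) - p(-x) \ge 0,~\forall x \ge 0 \iff \vert p(-x)\vert \le p(x),~\forall x \ge 0$, which is exactly your real-number splitting $a \ge \vert b \vert \iff a \pm b \ge 0$. The only difference is that you explicitly invoke Theorem \ref{realcoeff} to ensure $p(-x)$ is real before splitting the modulus inequality---a point the paper's proof leaves implicit, and which is genuinely needed (for polynomials it holds; for an arbitrary function $\mathbb{C} \to \mathbb{C}$ satisfying \eqref{modcondtwo} it need not), so your extra care is a refinement rather than a departure.
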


\begin{proof}
Notice that  
\begin{align*}
    p_e, p_o \in \mathscr{P}_1 
    &\iff \frac{p(x) + p(-x)}{2} \geq 0 \text{ and } \frac{p(x) - p(-x)}{2} \geq 0,~\forall x \ge 0  \\
    &\iff p(x) + p(-x) \geq 0 \text{ and } p(x) - p(-x) \geq 0,~\forall x \ge 0                      \\
    &\iff \vert p(-x) \vert \leq p(x),~\forall x \ge 0,
\end{align*}
as desired.
\end{proof}

\begin{theorem}
Conditions \eqref{spectralcond} and \eqref{ratiocond}  are independent.
\end{theorem}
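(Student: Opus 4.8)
The plan is to establish independence by exhibiting two polynomials: one satisfying \eqref{spectralcond} but violating \eqref{ratiocond}, and one satisfying \eqref{ratiocond} but violating \eqref{spectralcond}. The second direction is immediate: for the linear polynomial $q(x) = -x$ one has $\rho q(-\mu) + \mu q(\rho) = \rho\mu - \mu\rho = 0 \ge 0$ for all $\rho,\mu$, so \eqref{ratiocond} holds, whereas $q(1) = -1 < 1 = |q(-1)|$, so \eqref{spectralcond} fails (indeed $q \notin \mathscr{P}_1$). If one prefers strict inequality in \eqref{ratiocond}, the polynomial $q(x) = 1 - x$ works equally well, since then $\rho q(-\mu) + \mu q(\rho) = \rho + \mu > 0$ while $q(2) = -1 < |q(0)|$.

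For the first direction I would begin by reformulating both conditions so as to see what a separating example must look like. Because \eqref{spectralcond} is identical to \eqref{modcond}, Propositions \ref{altchar} and \ref{prop:evenoddcondition} show that \eqref{spectralcond} holds if and only if $p', p_e, p_o \in \mathscr{P}_1$. Writing $p = p_e + p_o$ and using that $p_e$ is even and $p_o$ is odd gives the identity $\rho p(-\mu) + \mu p(\rho) = \rho p_e(\mu) + \mu p_e(\rho) + \rho\mu\bigl(p_o(\rho)/\rho - p_o(\mu)/\mu\bigr)$. When \eqref{spectralcond} holds the two even terms are nonnegative, so \eqref{ratiocond} can fail only through the last term, that is, only when $t \mapsto p_o(t)/t$ fails to be nondecreasing on $(0,\infty)$.

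This observation dictates the construction and shows that low degree is fruitless. If $\deg p \le 4$, then $p_o(x) = bx^3 + dx$, and $p_o \in \mathscr{P}_1$ forces $b, d \ge 0$, whence $p_o(t)/t = bt^2 + d$ is nondecreasing; thus for $\deg p \le 4$ condition \eqref{spectralcond} actually implies \eqref{ratiocond}, and a separating polynomial must have degree at least $5$. I would therefore take the odd polynomial $p(x) = x^5 - x^3 + x = x\left((x^2 - \tfrac12)^2 + \tfrac34\right)$, for which $p_e \equiv 0$. Then $p(t) \ge 0$ for $t \ge 0$, and $p'(x) = 5x^4 - 3x^2 + 1 > 0$ since its discriminant in $x^2$ is negative; hence $p', p_e, p_o \in \mathscr{P}_1$ and \eqref{spectralcond} holds. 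Finally $p(t)/t = t^4 - t^2 + 1$ is strictly decreasing on $(0, 1/\sqrt{2})$, so choosing any $0 < \mu < \rho < 1/\sqrt{2}$ (for instance $\mu = \tfrac14$, $\rho = \tfrac12$) makes the last term of the identity negative and violates \eqref{ratiocond}.

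The main obstacle is the reformulation step. Without rewriting \eqref{ratiocond} through the even/odd decomposition it is not transparent which polynomials can separate the two conditions, and naive candidates (cubics, or quartics with a negative quadratic coefficient) fail because the nonnegativity of $p_e$ forces compensating terms that restore \eqref{ratiocond}. Recognizing that the entire obstruction lives in the monotonicity of $t \mapsto p_o(t)/t$ — so that degree $5$ is both necessary and achievable — is the crux of the argument; once the explicit polynomial is in hand, the verification is a routine computation.
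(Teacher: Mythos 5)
Your proposal is correct and takes essentially the same route as the paper: the paper likewise uses $q(x) = -x$ for one direction and an odd quintic ($p(x) = x^5 - 2x^3 + 2x$, verified via Propositions \ref{altchar} and \ref{prop:evenoddcondition}) for the other, so your $x^5 - x^3 + x$ is just a cosmetic variant. Your additional observation that any separating polynomial must have degree at least $5$ --- since for $\deg p \le 4$ the conditions $p_e, p_o \in \mathscr{P}_1$ force $p_o(t)/t$ to be nondecreasing, making \eqref{spectralcond} imply \eqref{ratiocond} --- is a nice refinement not present in the paper.
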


\begin{proof}
If $p(x) = x^5 - 2x^3 + 2x$, $\rho = 1$, and $\mu = .5$, then 
\[ \rho p(-\mu) + \mu p(\rho) = -0.78125 < 0, \] 
i.e., $p$ does not satisfy equation \eqref{ratiocond}.

Clearly, $p_e \in \mathscr{P}_1$ since $p_e(x) = 0$. Since 
\[ p'(x) = 5x^4 - 6x^2 + 2 = 5\left(x^2 - \frac{3}{5}\right)^2 + \frac{1}{5} \]
and 
\[ p_o(x) = p(x) = x((x^2-1)^2 + 1^2), \]
it follows that $p', p_o \in \mathscr{P}_1$. Thus, $p$ satisfies \eqref{spectralcond} by Propositions \ref{altchar} and \ref{prop:evenoddcondition}.

If $p(x) = -x$, then $p$ does not satisfy \eqref{spectralcond}. If $0 < \mu \le \rho$, then 
\[
\rho p(-\mu) + \mu p(\rho) = \rho\mu - \rho\mu = 0,
\]
i.e., $p$ satisfies \eqref{ratiocond}.
\end{proof}

\begin{theorem}
\label{thm:altchar}
If $p \in \mathbb{R}[x]$, then $p \in \mathscr{P}_2$ if and only if $p',p_e,p_o \in \mathscr{P}_1$ and $p$ satisfies \eqref{ratiocond}.
\end{theorem}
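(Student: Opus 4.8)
The plan is to obtain Theorem \ref{thm:altchar} as a direct corollary of Theorem \ref{thm:p2char} by substituting in the equivalent reformulations of the spectral condition \eqref{spectralcond} that were established in Propositions \ref{altchar} and \ref{prop:evenoddcondition}. The key observation is that \eqref{spectralcond} and \eqref{modcond} are literally the same condition: both assert that $p(x) \ge \vert p(y) \vert$ whenever $x \ge \vert y \vert$, the only difference being the names of the dummy variables ($\rho,\mu$ versus $x,y$). So no genuinely new work is required; the theorem repackages \eqref{spectralcond} in terms of the three membership conditions $p',p_e,p_o \in \mathscr{P}_1$ while leaving \eqref{ratiocond} untouched.

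The steps I would carry out, in order, are as follows. First, invoke Theorem \ref{thm:p2char} to reduce $p \in \mathscr{P}_2$ to the conjunction of \eqref{spectralcond} and \eqref{ratiocond}. Second, note that \eqref{spectralcond} is identical to \eqref{modcond}, and apply Proposition \ref{altchar} to replace \eqref{modcond} by the pair of conditions $p' \in \mathscr{P}_1$ (i.e.\ \eqref{derivpone}) together with \eqref{modcondtwo}. Third, apply Proposition \ref{prop:evenoddcondition} to replace \eqref{modcondtwo} by $p_e,p_o \in \mathscr{P}_1$. Chaining these equivalences yields
\[
p \in \mathscr{P}_2 \iff \bigl(p',p_e,p_o \in \mathscr{P}_1 \text{ and } p \text{ satisfies } \eqref{ratiocond}\bigr),
\]
which is exactly the claim.

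Since each of these three results is an \emph{if and only if}, the composite equivalence holds in both directions simultaneously, and there is no need to argue necessity and sufficiency separately. I do not anticipate a substantive obstacle in this proof: the entire content has already been isolated into Theorem \ref{thm:p2char} and the two propositions, and the remaining task is purely one of assembly. The only point demanding a moment's care is the bookkeeping in the second step—verifying that the hypothesis region $\rho \ge \vert \mu \vert$ of \eqref{spectralcond} matches the region $x \ge \vert y \vert$ of \eqref{modcond} under the relabeling, so that Proposition \ref{altchar} applies verbatim. Once that identification is made explicit, the conclusion is immediate.
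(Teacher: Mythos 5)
Your proposal is correct and matches the paper's own proof exactly: the paper likewise obtains Theorem \ref{thm:altchar} as an immediate consequence of Theorem \ref{thm:p2char} combined with Propositions \ref{altchar} and \ref{prop:evenoddcondition}. Your write-up merely makes explicit the chaining of equivalences (and the identification of \eqref{spectralcond} with \eqref{modcond}) that the paper leaves to the reader.
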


\begin{proof}
Immediate from Theorem \ref{thm:p2char} and Propositions \ref{altchar} and \ref{prop:evenoddcondition}.
\end{proof}

\begin{remark}
The preceding arguments also apply to \emph{entire functions}; as such, $f \in \mathscr{F}_2$ if and only if $f',f_e,f_o \in \mathscr{P}_1$ and $f$ satisfies \eqref{ratiocond}.   
\end{remark}

\begin{proposition}
\label{prop:evenoddratiocondition}
If $p$ is a polynomial such that $p_o$ satisfies \eqref{ratiocond} and $p_e \in \mathcal{P}_1$, then $p$ satisfies \eqref{ratiocond}.
\end{proposition}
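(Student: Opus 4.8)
The plan is to exploit the additivity of the expression appearing in \eqref{ratiocond} under the even--odd decomposition $p = p_e + p_o$. First I would fix real numbers $\rho$ and $\mu$ with $0 < \mu \le \rho$ and expand $\rho p(-\mu) + \mu p(\rho)$ by writing $p(-\mu) = p_e(-\mu) + p_o(-\mu)$ and $p(\rho) = p_e(\rho) + p_o(\rho)$. Using the parity relations $p_e(-\mu) = p_e(\mu)$ and $p_o(-\mu) = -p_o(\mu)$, the expression should regroup as
\[
\rho p(-\mu) + \mu p(\rho) = \bigl( \rho p_e(\mu) + \mu p_e(\rho) \bigr) + \bigl( \rho p_o(-\mu) + \mu p_o(\rho) \bigr).
\]

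The key observation is that this splits the quantity into a purely even contribution and a purely odd contribution, each of which matches one of the two hypotheses. For the odd contribution, note that $\rho p_o(-\mu) + \mu p_o(\rho)$ is precisely the left-hand side of \eqref{ratiocond} with $p$ replaced by $p_o$; since $p_o$ is assumed to satisfy \eqref{ratiocond}, this term is nonnegative. For the even contribution, since $p_e \in \mathscr{P}_1$ and $\mu, \rho > 0$, both $p_e(\mu)$ and $p_e(\rho)$ are nonnegative, whence $\rho p_e(\mu) + \mu p_e(\rho) \ge 0$ as a sum of products of nonnegative numbers. Adding the two nonnegative contributions yields $\rho p(-\mu) + \mu p(\rho) \ge 0$, which is \eqref{ratiocond} for $p$.

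There is essentially no obstacle here beyond verifying the regrouping: the entire content lies in the algebraic identity above, which isolates the odd part's ratio condition from the even part's nonnegativity on $[0,\infty)$. The only point requiring care is checking that the parities of $p_e$ and $p_o$ send the $p_o(-\mu)$ term to the correct sign, so that the odd contribution lines up verbatim with \eqref{ratiocond}; once this is confirmed, the conclusion is immediate.
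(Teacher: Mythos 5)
Your proposal is correct and follows exactly the same route as the paper: decompose $p = p_e + p_o$, use the parity of $p_e$ to regroup $\rho p(-\mu) + \mu p(\rho)$ into $\left(\rho p_e(\mu) + \mu p_e(\rho)\right) + \left(\rho p_o(-\mu) + \mu p_o(\rho)\right)$, and observe that the first term is nonnegative because $p_e \in \mathscr{P}_1$ while the second is nonnegative because $p_o$ satisfies \eqref{ratiocond}. Your write-up in fact makes explicit the sign bookkeeping that the paper leaves to the reader.
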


\begin{proof}
The result follows with the observation that
\begin{align*}
\mu p(\rho)+\rho p(-\mu) 
&= \mu(p_e(\rho) + p_o(\rho)) + \rho(p_e(-\mu) + p_o(-\mu))                                             \\
&= \left(\mu p_e(\rho) + \rho p_e(\mu)\right) + \left(\mu p_o(\rho) + \rho p_o(-\mu)\right),
\end{align*}
which is nonnegative by the hypotheses.
\end{proof}

Clark and Paparella \cite[Conjecture 5.2]{cp2022} conjectured that $\mathscr{P}_{n+1} \subset \mathscr{P}_{n}$, $\forall n \in \mathbb{N}$ and showed that $\mathscr{P}_2 \subset \mathscr{P}_1$. The following result settles the conjecture when $n=2$. 

\begin{theorem}
$\mathscr{P}_3 \subset \mathscr{P}_2$. 
\end{theorem}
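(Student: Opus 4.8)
The containment $\mathscr{P}_3 \subseteq \mathscr{P}_2$ is already known, so the plan is to establish strictness by exhibiting a single witness polynomial $p \in \mathscr{P}_2 \setminus \mathscr{P}_3$. The strategy splits into two independent tasks: first, certify that $p \in \mathscr{P}_2$ using the equivalent conditions of Theorem \ref{thm:altchar}, namely $p', p_e, p_o \in \mathscr{P}_1$ together with \eqref{ratiocond}; and second, produce an explicit nonnegative matrix of order three on which $p$ fails to return a nonnegative matrix. The structural reason such a $p$ should exist is that the reduced forms driving the $\mathscr{P}_2$ analysis carry \emph{real} subdominant eigenvalues, whereas nonnegative matrices of order three admit a complex-conjugate subdominant pair; the witness is designed to exploit exactly this gap.

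For the second task, I would test $p$ against the cyclic permutation matrix
\[ C = \begin{bmatrix} 0 & 1 & 0 \\ 0 & 0 & 1 \\ 1 & 0 & 0 \end{bmatrix}, \]
which is nonnegative and satisfies $C^3 = I$. Consequently $p(C)$ is the circulant
\[ p(C) = \left(\sum_{k \equiv 0} a_k\right) I + \left(\sum_{k \equiv 1} a_k\right) C + \left(\sum_{k \equiv 2} a_k\right) C^2, \]
where the congruences are taken modulo $3$. Because $I$, $C$, and $C^2$ have pairwise disjoint supports, the nine entries of $p(C)$ are precisely these three coefficient sums, placed in disjoint positions. Thus $p \notin \mathscr{P}_3$ as soon as one of the three sums is negative, and the problem reduces to engineering a polynomial in $\mathscr{P}_2$ whose coefficients, grouped by residue modulo $3$, produce a negative block.

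To reconcile this with membership in $\mathscr{P}_2$, I would hide the required negative coefficient inside a perfect-square even part and pair it with a trivial odd part: take $p_e(x) = (x^2 - a)^2$ and $p_o(x) = x$, i.e.\ $p(x) = x^4 - 2ax^2 + x + a^2$ for a parameter $a > 0$. The square guarantees $p_e \in \mathscr{P}_1$; the odd part $p_o(x) = x$ lies in $\mathscr{P}_1$ and satisfies \eqref{ratiocond} (so $p$ does as well, by Proposition \ref{prop:evenoddratiocondition}); yet the coefficient $-2a$ of $x^2$ sits alone in the residue class $k \equiv 2 \pmod 3$ and forces the $C^2$-coefficient of $p(C)$ to equal $-2a < 0$. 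The remaining, and only delicate, requirement is $p' \in \mathscr{P}_1$, i.e.\ $p'(x) = 4x^3 - 4ax + 1 \ge 0$ for $x \ge 0$; minimizing over $[0,\infty)$ bounds $a$ from above (one finds $a \le 3/4$), and any admissible choice completes the example.

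The main obstacle is precisely this tension: the $\mathscr{P}_2$ conditions are restrictive — they force $p$ to be nondecreasing on $[0,\infty)$ with nonnegative even and odd parts — so the negative coefficient cannot be introduced directly, since a negative odd coefficient would violate $p_o \in \mathscr{P}_1$ and an isolated negative even coefficient would violate $p_e \in \mathscr{P}_1$. Packaging the negativity inside a perfect square is what circumvents this, and checking the derivative condition is the one genuine computation; once $a$ is fixed admissibly, both the $\mathscr{P}_2$ certification and the order-three failure follow.
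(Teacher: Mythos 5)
Your proposal is correct, and its overall skeleton matches the paper's: both exhibit a witness polynomial with trivial odd part $p_o(x)=x$ and certify membership in $\mathscr{P}_2$ via Theorem \ref{thm:altchar} together with Proposition \ref{prop:evenoddratiocondition} (the paper uses the fixed polynomial $p(x)=x^4-x^2+x+1$; your family $p(x)=x^4-2ax^2+x+a^2$ with $0<a\le 3/4$ contains essentially the same example, and your bound $a\le 3/4$ is right, since the minimum of $p'(x)=4x^3-4ax+1$ on $[0,\infty)$ is $1-8a^{3/2}/3^{3/2}$). Where you genuinely diverge is the exclusion from $\mathscr{P}_3$: the paper cites a known result (Bharali--Holtz, or Clark--Paparella) that any $p\in\mathscr{P}_n$ of degree $\ge n-1$ has nonnegative coefficients $a_0,\dots,a_{n-1}$, and concludes from $a_2<0$; you instead evaluate $p$ at the $3$-cycle permutation matrix $C$ and use $C^3=I$ to read off the entries of $p(C)$ as coefficient sums grouped by residue mod $3$, so the isolated negative coefficient $-2a$ in residue class $2$ appears verbatim as a negative entry. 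Your route is self-contained and arguably more transparent (it is in the same spirit as the proof of the cited coefficient result, which uses nilpotent shift matrices, but your circulant version requires checking that no other coefficient lands in the same residue class, which your choice of witness guarantees); the paper's route is shorter by citation and works for any polynomial with a negative early coefficient regardless of how the remaining coefficients distribute mod $3$. A further small dividend of your construction: taking $p_e$ to be the perfect square $(x^2-a)^2$ makes $p_e\in\mathscr{P}_1$ immediate, whereas the paper must exhibit the decomposition $x^4-x^2+1=(x^2-\tfrac12)^2+\tfrac34$.
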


\begin{proof}
Consider the polynomial $p(x) = x^4 - x^2 + x + 1$. It is known that if 
\[ p(x) = \sum_{k=0}^m a_k x^k \in \mathscr{P}_n,~a_m \ne 0, \] 
and $m \ge n-1$, then $a_k \ge 0$, $\forall k \in \{0,1,\ldots,n-1\}$ (see Bharali and Holtz \cite[Proposition 2]{bh2008} or Clark and Paparella [Corollary 4.2]\cite{cp2022}). Thus, $p \notin \mathscr{P}_3$.

Since $p_o(x) = x$, it is clear that $p_o \in \mathscr{P}_1$. Notice that $p', p_e \in \mathscr{P}_1$ since 
\[ p'(x) = 4x^3 - 2x^2 + 1 = x [(2x-1)^2  + 1^2] + [(2x-1)^2 + 0^2] \]
and 
\[ p_e(x) = x^4 - x^2 + 1 = \left(x^2 - \frac{1}{2} \right)^2 + \frac{3}{4}. \] 

We also have that $p_o$ satisfies \eqref{ratiocond} since 
\[
\rho p_o(-\mu) + \mu p_o(\rho) = -\rho\mu + \mu \rho = 0.
\]
By Proposition \ref{prop:evenoddratiocondition}, $p$ satisfies \eqref{ratiocond}. Thus, $p \in \mathscr{P}_2$ by Theorem \ref{thm:altchar}.
\end{proof}

We conclude by providing a novel characterization for polynomials that preserve all nonnegative circulant matrices or order two. If 
\begin{equation}
\label{gentwocirc}
A = 
\begin{bmatrix}
a & b \\
b & a
\end{bmatrix} 
=
\frac{1}{2}
\begin{bmatrix}
(a+b)+(a-b) & (a+b)-(a-b) \\
(a+b)-(a-b) & (a+b)+(a-b)
\end{bmatrix},
\end{equation}
then by \eqref{twocirc} and \eqref{polycirc}
\[ 
p(A) = \frac{1}{2} 
\begin{bmatrix}
p(\rho) + p(\mu)  & p(\rho) - p(\mu) \\
p(\rho) - p(\mu)  & p(\rho) + p(\mu)
\end{bmatrix} \ge 0,
\]
where $\rho = a + b$ and $\mu = a - b$. We immediately obtain the following result. 

\begin{theorem}
[cf.~{\cite[Theorem 10]{bh2008}}]
If $p \in \mathbb{R}[x]$, then $p$ preserves all two-by-two nonnegative circulant matrices of the form \eqref{gentwocirc} if and only if $p',p_e,p_o \in \mathscr{P}_1$. 
\end{theorem}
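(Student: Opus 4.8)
The plan is to reduce the circulant-preservation requirement directly to condition \eqref{modcond} and then invoke the two preceding propositions. First I would record that a nonnegative circulant matrix of order two is precisely a matrix of the form \eqref{gentwocirc} with $a, b \ge 0$, whose eigenvalues are $\rho = a + b$ and $\mu = a - b$. The display immediately preceding the theorem already computes
\[
p(A) = \frac{1}{2}
\begin{bmatrix}
p(\rho) + p(\mu) & p(\rho) - p(\mu) \\
p(\rho) - p(\mu) & p(\rho) + p(\mu)
\end{bmatrix},
\]
so $p(A) \ge 0$ if and only if both $p(\rho) + p(\mu) \ge 0$ and $p(\rho) - p(\mu) \ge 0$; that is, if and only if $p(\rho) \ge \vert p(\mu) \vert$.

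The key observation is a parametrization step: as $(a,b)$ ranges over all pairs of nonnegative reals, the pair $(\rho, \mu) = (a+b,\, a-b)$ ranges over exactly the set $\{ (\rho,\mu) : \rho \ge \vert \mu \vert \}$, since $a = (\rho + \mu)/2 \ge 0$ and $b = (\rho - \mu)/2 \ge 0$ together are equivalent to $\rho \ge \vert \mu \vert$. Consequently, $p$ preserves every nonnegative circulant matrix of the form \eqref{gentwocirc} if and only if $p(\rho) \ge \vert p(\mu) \vert$ for all $\rho \ge \vert \mu \vert$, which is exactly condition \eqref{modcond} (equivalently, \eqref{spectralcond}).

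It then remains only to assemble the earlier propositions. By Proposition \ref{altchar}, condition \eqref{modcond} is equivalent to the conjunction of $p' \in \mathscr{P}_1$ (condition \eqref{derivpone}) and condition \eqref{modcondtwo}; and by Proposition \ref{prop:evenoddcondition}, condition \eqref{modcondtwo} is equivalent to $p_e, p_o \in \mathscr{P}_1$. Chaining these equivalences yields that preservation of all two-by-two nonnegative circulants is equivalent to $p', p_e, p_o \in \mathscr{P}_1$, as claimed. The only step requiring any care is the parametrization establishing that the eigenvalue pairs fill out precisely $\{ \rho \ge \vert \mu \vert \}$ rather than a proper subset; once this is in hand the result is immediate, since the substantive analytic content has already been packaged into Propositions \ref{altchar} and \ref{prop:evenoddcondition}.
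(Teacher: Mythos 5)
Your proof is correct and follows exactly the route the paper intends: the paper leaves the argument implicit (declaring the result ``immediate'' after computing $p(A)$ for matrices of the form \eqref{gentwocirc}), and you have simply made explicit the same chain --- the entrywise condition $p(\rho) \ge \vert p(\mu)\vert$, the parametrization showing the eigenvalue pairs $(\rho,\mu)=(a+b,a-b)$ fill out precisely $\{\rho \ge \vert\mu\vert\}$, and the appeal to Propositions \ref{altchar} and \ref{prop:evenoddcondition}. Your care in verifying the parametrization is exactly the right detail to check, and nothing is missing.
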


\section*{Acknowledgement}

We thank the anonymous referee for their careful review and thoughtful suggestions that greatly improved this work. 

\bibliography{nonpoly}
\bibliographystyle{abbrv}

\end{document}